\newcommand{\C}{\mathbb{C}}
\newcommand{\Q}{\mathbb{Q}}
\newcommand{\defn}[1]{\textbf{\emph{#1}}}
\newtheorem*{thm*}{Theorem} 
\newtheorem{conj}{Conjecture}
\newtheorem{lemma}{Lemma}
\theoremstyle{definition}
\newtheorem*{remark*}{Remark}
\def\arXiv#1{{\href{http://arxiv-web3.library.cornell.edu/abs/#1}{arXiv:#1}}}
\title{Proof of a conjecture of Kulakova et al. related to the $\mathfrak{sl}_2$ weight system}
\author{Dror Bar-Natan}
\address{
  Department of Mathematics\\
  University of Toronto\\
  Toronto Ontario M5S 2E4\\
  Canada
}
\email{drorbn@math.toronto.edu}
\urladdr{http://www.math.toronto.edu/~drorbn}
\author{Huan T. Vo}
\address{
  Department of Mathematics\\
  University of Toronto\\
  Toronto Ontario M5S 2E4\\
  Canada
}
\email{vohuan@math.toronto.edu}
\begin{document}
\maketitle

\begin{abstract}
In this article, we show that a conjecture raised in \cite{sl2Lando}, which regards the coefficient of the highest term when we evaluate the $\mathfrak{sl}_2$ weight system on the projection of a diagram to primitive elements, is a consequence of the Melvin-Morton-Rozansky conjecture, proved in \cite{MMR}.  
\end{abstract}

\section{Introduction}
In this section, we briefly recall a conjecture of \cite{sl2Lando} together with the relevant terminologies. A more complete treatment can be found in \cite{sl2Lando}. Given a chord diagram $D$ with $m$ chords, its \defn{labelled intersection graph} $\Gamma(D)$ is the simple labelled graph whose vertices are the chords of $D$, labelled from $1$ to $m$, and two vertices are connected by an edge if the two corresponding chords intersect. 

Following \cite{sl2Lando}, by orienting the chords of $D$ arbitrarily, we can turn $\Gamma(D)$ into an oriented graph as follows. Given two intersecting oriented chords $a$ and $b$, the edge connecting $a$ and $b$ goes from $a$ to $b$ if the beginning of the chord $b$ belongs to the arc of the outer circle of $D$ which starts at the tail of $a$ and goes in the positive (counter-clockwise) direction to the head of $a$ (see Figure \ref{fig:orientation}). We also have another description of the orientation. Given two intersecting oriented chords $a$ and $b$, we look at the smaller arc of the outer circle of $D$ that contains the tails of $a$ and $b$. Then we orient the edge connecting $a$ and $b$ from $a$ to $b$ if we go from the tail of $a$ to the tail of $b$ along the smaller arc in the counter-clockwise direction. The reader should check that the two definitions of orientation are equivalent. 

\begin{figure}[htb]
 \centering
\begin{tikzpicture}
      \draw [thick] (0,0) circle [radius=1];
      \draw [thick] [->] (-1,0)--(1,0);
      \draw [thick] [->] (0,-1)--(0,1);
      \node [left] at (-1,0) {$a$};
      \node [below] at (0,-1) {$b$};
      \draw [thick] [->] (2,0)--(3,0);
      \node [below] at (4,0) {$a$};
      \node [below] at (6,0) {$b$};
      \draw [thick] [->] (4,0)--(6,0);
 \end{tikzpicture}
 \caption{Orienting $\Gamma(D)$}
 \label{fig:orientation}
\end{figure}
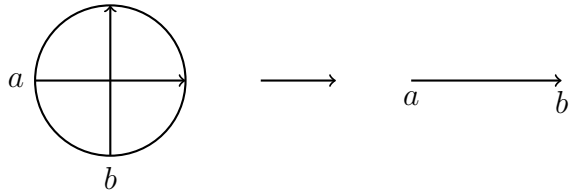

Consider a circuit of even length $l=2k$ in the oriented graph $\Gamma(D)$. By a \defn{circuit} we mean a closed path in $\Gamma(D)$ with no repeated vertices. Choose an arbitrary orientation of the circuit. For each edge, we assign a weight $+1$ if the orientation of the edge coincides with the orientation of the circuit and $-1$ otherwise. The \defn{sign} of a circuit is the product of the weights over all the edges in the circuit. We say that a circuit is \defn{positively oriented} if its sign is positive and \defn{negatively oriented} if its sign is negative. We define
  \[R_k(D)\colon=\sum_{s}\mathrm{sign} (s),\]      
where the sum	 is over all (un-oriented) circuits $s$ in $\Gamma(D)$ of length $2k$. 

\medskip

It is well-known that given a Lie algebra $\mathfrak{g}$ equipped with an ad-invariant non-degenerate bilinear form, we can construct a weight system $w_{\mathfrak{g}}$ with values in the center $ZU(\mathfrak(g))$ of the universal enveloping algebra $U(\mathfrak{g})$ (see, for instance \cite[\textbf{Section~6}]{VassilievDuzhin}). In the case of the Lie algebra $\mathfrak{sl}_2$, we obtain a weight system with values in the ring $\C[c]$ of polynomials in a single variable $c$, where $c$ is the Casimir element of the Lie algebra $\mathfrak{sl}_2$. Note that the Casimir element $c$ also depends on the choice of the bilinear form. For the case of $\mathfrak{sl}_2$, an ad-invariant non-degenerate bilinear form is given by 
   \[\langle x,y\rangle=\mathrm{Tr}(\rho(x)\rho(y)),\quad x,y\in \mathfrak{sl}_2,\]
where $\rho\colon \mathfrak{sl}_2\to \mathfrak{gl}_2$ is the standard representation of $\mathfrak{sl}_2$. Since $\mathfrak{sl}_2$ is simple, any invariant form is of the form $\lambda\langle\cdot,\cdot\rangle$ for some constant $\lambda$. If we let $c_{\lambda}$ be the corresponding Casimir element and $c=c_1$, then $c_{\lambda}=c/\lambda$. Therefore if $D$ is a chord diagram with $n$ chords, the weight system
  \[w_{\mathfrak{sl}_2}(D)=c^n+a_{n-1}c^{n-1}+\cdots+a_1c\]
and the weight system corresponding to $\lambda\langle\cdot,\cdot\rangle$  
 \[w_{\mathfrak{sl}_2,\lambda}(D)=c_{\lambda}^n+a_{n-1,\lambda}c_{\lambda}^{n-1}+\cdots+a_{1,\lambda}c_{\lambda}\]
are related by
  \[w_{\mathfrak{sl}_2,\lambda}(D)=\frac{1}{\lambda^n}\left.w_{\mathfrak{sl}_2}(D)\right|_{c=\lambda c_{\lambda}},\] 
or 
 \[a_{n-1}=\lambda a_{n-1,\lambda},\quad a_{n-2}=\lambda^2a_{n-2.\lambda},\ \dots \ a_2=\lambda^{n-2}a_{2,\lambda},\quad a_1=\lambda^{n-1} a_{1,\lambda}.\]
  
  \medskip
  
Let $\mathcal{A}$ be the vector space generated by all chord diagrams (modulo the 1-term and 4-term relations). Then $\mathcal{A}$ is graded by the degree (number of chords) of a chord diagram, i.e.
    \[\mathcal{A}=\bigoplus_{n=0}^{\infty} \mathcal{A}_n\]
where $\mathcal{A}_n$ is the space generated by all chord diagrams with $n$ chords (modulo the 1-term and 4-term relations). Recall that we can turn $\mathcal{A}$ into a bialgebra by defining the following comultiplication 
   \[ \Delta(D)=\sum_{V(D)=V_1\sqcup V_2} D\big|_{V_1}\otimes D\big|_{V_2},\]
 where the sum is taken over all ordered disjoint partitions of $V(D)$, the set of chords of $D$. (Note that $V_1$ or $V_2$ can be empty.) An element $D\in \mathcal{A}$ is called \defn{primitive} if 
   \[\Delta(D)=1\otimes D+D\otimes 1.\]        
The set of all primitive elements forms a vector subspace of $\mathcal{A}$, which we denote by $\mathcal{P}$. On the other hand, a chord diagram $D$ is called \defn{decomposable} if it can be written as a product $D=D_1\cdot D_2$ of two diagrams of smaller degrees. We let $\mathcal{D}$ denote the subspace spanned by the decomposable elements of $\mathcal{A}$. Note that $\mathcal{P}$ and $\mathcal{D}$ inherit a grading from $\mathcal{A}$ and $\mathcal{A}_n=\mathcal{P}_n\oplus \mathcal{D}_n$. 

Following \cite{PrimitiveLando} we now define a map which projects $\mathcal{A}$ onto $\mathcal{P}$. Let $D$ be a chord diagram with $n$ chords, $V=V(D)$ its set of chords. Then the map $\pi_n$ from the space of chord diagrams to its primitive elements is given by 
  \[\pi_n(D)=D-1!\sum_{\{V_1,V_2\}}\left.D\right|_{V_1}\cdot\left.D\right|_{V_2}+2!\sum_{\{V_1,V_2,V_3\}}\left.D\right|_{V_1}\cdot\left.D\right|_{V_2}\cdot\left.D\right|_{V_3}-\cdots,\]      
 where sums are taken over all unordered disjoint partitions of $V$ into non-empty subsets and $\left.D\right|_{V_i}$ denotes $D$ with only chords from $V_i$. If we change unordered partitions to ordered ones, we obtain 
    \begin{equation}\label{PrimProj}
    \pi_n(D)=D-\frac12\sum_{V=V_1\sqcup V_2}\left.D\right|_{V_1}\cdot\left.D\right|_{V_2}+\frac13\sum_{V=V_1\sqcup V_2\sqcup V_3}\left.D\right|_{V_1}\cdot\left.D\right|_{V_2}\cdot\left.D\right|_{V_3}-\cdots.
    \end{equation}
 It is shown (see \cite{PrimitiveLando}) that $\pi_n$ is indeed a projection from $\mathcal{A}_n$ onto $\mathcal{P}_n$ along $\mathcal{D}_n$, i.e. it takes each primitive element into itself and it takes all decomposable elements to zero. Now we are ready to state the conjecture raised in \cite{sl2Lando}.

\begin{conj}\label{sl2Conj}
 Let $D$ be a chord diagrams with $2m$ chords ($m\geq 1$), and $w_{\mathfrak{sl}_2,2}$ be the weight system associated with $\mathfrak{sl}_2$ and $2\langle\cdot,\cdot\rangle$. Then 
    \[w_{\mathfrak{sl}_2,2}(\pi_{2m}(D))=2R_m(D)c_2^m+\text{ terms of degree less than $m$ in $c_2$}.\]
\end{conj}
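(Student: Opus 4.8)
The plan is to deduce Conjecture~\ref{sl2Conj} from the weight-system reformulation of the Melvin--Morton--Rozansky theorem. Recall that MMR (\cite{MMR}), after passing from $w_{\mathfrak{sl}_2}$ to its leading part in the Casimir variable, identifies the resulting weight system with the weight system $w_\nabla$ of the Alexander--Conway polynomial. Concretely, the two halves of MMR say: (i) if $P\in\mathcal P_d$ is primitive then $w_{\mathfrak{sl}_2}(P)$ has degree at most $\lceil d/2\rceil$ in $c$ (the Melvin--Morton triangularity), and (ii) when $d=2m$ the coefficient of $c^m$ in $w_{\mathfrak{sl}_2}(P)$ is a fixed nonzero rational multiple of $w_\nabla(P)$ (the Rozansky part). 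Applying (i) to $\pi_{2m}(D)$, and carrying the rescaling $w_{\mathfrak{sl}_2,2}=2^{-n}w_{\mathfrak{sl}_2}|_{c=2c_2}$ through, already yields that $w_{\mathfrak{sl}_2,2}(\pi_{2m}(D))$ has the shape ``$(\ast)\,c_2^m+(\text{terms of degree}<m)$''; one then has to check that the particular normalisation produced this way makes the top coefficient $(\ast)$ come out to be $2R_m(D)$ and not some other multiple of it.

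So everything reduces to computing the coefficient of $c_2^m$, which by (ii) equals, up to the tracked constant, the value of $w_\nabla$ on $\pi_{2m}(D)$ --- and the lower-degree corrections coming from the primitive projection do not affect this top Casimir coefficient. The combinatorial heart of the proof is then to show that this value equals $2R_m(D)$. Here I would invoke the description of $w_\nabla$ on a chord diagram in terms of its (oriented) intersection graph: on a diagram with $2m$ chords the relevant quantity is a signed sum over permutations $\sigma$ of the chords, weighted by $\operatorname{sgn}(\sigma)\prod_i A(D)_{i,\sigma(i)}$, where $A(D)$ is the signed adjacency matrix of $\Gamma(D)$ recording the orientation of Figure~\ref{fig:orientation}. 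Taking the primitive part should cut this sum down to the single-$(2m)$-cycle permutations, i.e.\ to the circuits of $\Gamma(D)$ of length $2m$: a $(2m)$-cycle $\sigma$ and its inverse give the two orientations of one such circuit $s$, the product $\prod_i A(D)_{i,\sigma(i)}$ equals $\mathrm{sign}(s)$ and is in particular independent of the orientation chosen on the chords of $D$ (which must be verified), and the two orientations contribute equally; summing over $s$ then produces $2R_m(D)$.

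The step I expect to be the main obstacle is this last identification: pinning down the precise combinatorial formula for $w_\nabla$ on primitive chord diagrams, in exactly the normalisation compatible with \cite{MMR}, and then reconciling every sign and every power of $2$ --- those from $\operatorname{sgn}(\sigma)$ and the antisymmetry of $A(D)$, the one from reversing the orientation of a circuit, the factor $2$ from the two orientations of a circuit, and the factors of $2$ separating $c_2$ from $c$ --- so that the final answer is the clean expression $2R_m(D)c_2^m$ with no residual constant. A secondary point requiring care is the way MMR is quoted: it is a statement about knots, so I would first recast it as an identity of weight systems on $\mathcal A$, justify applying it to the individual primitive element $\pi_{2m}(D)$, and confirm that the drop in Casimir-degree from $2m$ (the generic degree of $w_{\mathfrak{sl}_2}$ on $\mathcal A_{2m}$) down to at most $m$ is precisely what the Melvin--Morton triangularity supplies.
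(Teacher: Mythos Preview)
Your proposal is correct and follows essentially the same route as the paper: pass MMR to the weight-system level, use it to identify the top Casimir coefficient of $w_{\mathfrak{sl}_2}$ on $\pi_{2m}(D)$ with (minus) the Alexander--Conway weight system $W_C$ on $\pi_{2m}(D)$, express $W_C$ via $\det\mathrm{IM}(D)$, observe that the primitive projection isolates the single $2m$-cycle terms (i.e.\ the Hamiltonian circuits of $\Gamma(D)$), match signs to obtain $2R_m(D)$, and finally rescale $c\mapsto 2c_2$. The only packaging difference is that the paper makes the step ``primitive projection $\Rightarrow$ single-cycle contributions'' explicit via the identity $(\log w)(D)=w(\pi_{2m}(D))$ for multiplicative $w$, whereas you phrase it directly; and the paper deduces your (ii) from the product form $W_{JJ}\cdot W_C=\mathbf{1}$ evaluated on a primitive element rather than quoting a ``Rozansky part'' separately.
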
 

\section{Proof of the conjecture}
The conjecture is a consequence of the Melvin-Morton-Rozansky (MMR) conjecture, which was proved in \cite{MMR}. We recall the statement of the MMR conjecture below. Let $J^k(q)$ be the ``framing independent'' colored Jones polynomial associated with the $k$-dimensional irreducible representation of $\mathfrak{sl}_2$. Set $q=e^h$, write $J^k(q)$ as power series in $h$:
  \[J^k=\sum_{n=0}^{\infty} J^k_n h^n.\]       
It is known that $J^k_n$ is given by (see \cite[\textbf{Theorem~6.14}]{QuantumInvariantOhtsuki} and \cite[\textbf{Section~11.2.3}]{VassilievDuzhin})
    \[J^k_n=\mathrm{Tr}\left(\left.w'_{\mathfrak{sl}_2}\right|_{c=\frac{k^2-1}{2}\cdot I_k}\right).\]
Here $I_k$ is the $k\times k$ identity matrix and $w'_{\mathfrak{sl}_2}$ is the ``deframing'' of the weight system $w_{\mathfrak{sl}_2}$ (see \cite[\textbf{Section~4.5.4}]{VassilievDuzhin}). For any chord diagram $D$ of degree $n$ (modulo the framing independent relation), the value $w'_{\mathfrak{sl}_2}(D)$ is a polynomial in $c$ of degree at most $\lfloor n/2\rfloor$ (see \cite[\textbf{Exercise~6.25}]{VassilievDuzhin}). It follows that $J^k_n$ is a polynomial in $k$ of degree at most $n+1$. Dividing $J^k_n$ by $k$ we then obtain 
  \[\frac{J^k}{k}=\sum_{n=0}^{\infty} \left(\sum_{0\leq j\leq n} b_{n,j}k^j\right) h^n,\]
where $b_{n,j}$ are Vassiliev invariants of order $\leq n$. We denote the highest order part of the colored Jones polynomial by   
    \[JJ\colon =\sum_{n=0}^{\infty} b_{n,n}h^n.\]
    
Next we recall the definition of the Alexander-Conway polynomial of link diagrams. The Conway polynomial $C(t)$ can be defined by the skein relation:
  \begin{enumerate}
   \item[(i)] $C(\mathrm{unknot})=1$,
   \item[(ii)] $C(L_+)-C(L_{-})=tC(L_0)$,
  where $L_+$, $L_{-}$ and $L_0$ are identical outside the regions consisting of a positive crossing, a negative crossing and no crossing, respectively.  
  \end{enumerate}
 The Alexander-Conway polynomial is a Vassiliev power series: 
  \[ \widetilde{C}(h)\colon=\frac{h}{e^{h/2}-e^{-h/2}}\left.C\right|_{t=e^{h/2}-e^{-h/2}}=\sum_{n=0}^{\infty}c_nh^n.\]
 Now we are ready to state the MMR conjecture, which was proved in \cite{MMR}. 
 
 \begin{thm*}
   With the notations as above, we have 
     \begin{equation}\label{MMRPoly}
     JJ(h)(K)\cdot \widetilde{C}(h)(K)=1
    \end{equation} 
   for any knot $K$.  
 \end{thm*}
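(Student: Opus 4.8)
The plan is to deduce \eqref{MMRPoly} from the structure of the Kontsevich integral $Z$. Each coefficient of $h^n$ in $JJ(h)$ and in $\widetilde C(h)$ is (the restriction of) a Vassiliev invariant of order at most $n$, so the universality of $Z$ provides weight systems $W_{JJ},W_{\widetilde C}\colon\mathcal A\to\C[[h]]$ with $JJ(K)=W_{JJ}(Z(K))$ and $\widetilde C(K)=W_{\widetilde C}(Z(K))$. Since $Z(K)$ is group-like for the coproduct $\Delta$, one has $W_{JJ}(Z(K))\,W_{\widetilde C}(Z(K))=(W_{JJ}\ast W_{\widetilde C})(Z(K))$, where $\ast$ is the convolution product on $\mathcal A^{\ast}$, so it suffices to prove the identity of weight systems $W_{JJ}\ast W_{\widetilde C}=\varepsilon$. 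After replacing $JJ$ and $\widetilde C$ by $JJ/JJ(\text{unknot})$ and $\widetilde C/\widetilde C(\text{unknot})$ --- each multiplicative under connected sum, and with unknot values that already multiply to $1$ --- the two weight systems become algebra homomorphisms $\mathcal A\to\C[[h]]$, i.e.\ group-like elements of $\mathcal A^{\ast}$; as $\mathcal A$ is cocommutative, $W_{JJ}\ast W_{\widetilde C}=\varepsilon$ is then equivalent to the single linear relation $W_{JJ}\big|_{\mathcal P}=-\,W_{\widetilde C}\big|_{\mathcal P}$ on the space of primitives. This reduced statement no longer mentions knots: it is a purely combinatorial identity of weight systems, and proving it proves the theorem.

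To compute the two restrictions I would use the data already recalled above. From $J^k_n=\mathrm{Tr}\!\left(w'_{\mathfrak{sl}_2}\big|_{c=(k^2-1)/2}\right)$ and the bound $\deg_c w'_{\mathfrak{sl}_2}(D)\le\lfloor n/2\rfloor$, the top coefficient $b_{n,n}$ of $J^k_n/k$ vanishes for $n$ odd, while for $n=2m$ it equals $2^{-m}$ times the coefficient of $c^m$ in $w'_{\mathfrak{sl}_2}(D)$ --- precisely the ``leading $\mathfrak{sl}_2$'' quantity appearing in Conjecture~\ref{sl2Conj}. Thus, up to the explicit scalar $2^{-m}$ on $\mathcal A_{2m}$, $W_{JJ}$ is the top-$c$-degree part of the $\mathfrak{sl}_2$ weight system, which on a primitive I would evaluate by running the recursive rule for $w_{\mathfrak{sl}_2}$ and keeping only the dominant contribution of each contraction; the natural bookkeeping passes to the oriented intersection graph $\Gamma(D)$, where the leading term is governed by signed sums over subgraphs --- the same combinatorial raw material out of which $R_m$ is built. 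On the other side, the Alexander-Conway weight system $W_{\widetilde C}$ has a known state-sum description (via the $\mathfrak{gl}(1|1)$ weight system, or by expanding the Conway skein relation to order $n$), which on a primitive likewise collapses to an invariant of $\Gamma(D)$. Conceptually the rigidity comes from the loop expansion of $Z(K)$ in the PBW / Jacobi-diagram model: the $c$-degree of the $\mathfrak{sl}_2$ value of an $\ell$-loop diagram of degree $n$ is at most $n/2-(\ell-1)$, so only the $1$-loop (``wheels'') part survives in the top row as $k\to\infty$, and that part is exactly what records the Alexander polynomial.

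The hard part is the final matching: showing that the two graph invariants above agree, up to the predicted sign and the normalization $2^{-m}$, on every primitive in every degree. Two features make it delicate. First, $w_{\mathfrak{sl}_2}$ is defined only recursively, so isolating its highest power of $c$ requires a careful ``leading-order calculus'' for the $\mathfrak{sl}_2$ contractions --- one must see which terms can lower the $c$-degree and discard them. Second, one then has to locate the resulting signed subgraph sum inside the Conway polynomial, i.e.\ prove a purely combinatorial identity between that count and the Conway weight system. I expect this identification, with its attendant sign and power-of-$2$ bookkeeping, to be the crux; everything preceding it is formal manipulation of $Z$ and its group-likeness.
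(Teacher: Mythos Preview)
The paper does not supply its own proof of this theorem; it simply quotes the result from \cite{MMR} and then summarizes the key reduction --- that \eqref{MMRPoly} is equivalent to the weight-system identity $W_{JJ}\cdot W_C=\mathbf{1}$ --- in order to derive Lemma~1. Your first paragraph rediscovers exactly this reduction (group-likeness of $Z(K)$, convolution on $\mathcal A^\ast$, passage to primitives), so at the structural level you are aligned with both the paper's summary and the original \cite{MMR}.

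The gap is that you do not actually prove the reduced statement $W_{JJ}\big|_{\mathcal P}=-W_{\widetilde C}\big|_{\mathcal P}$. You correctly identify it as ``the crux'' and outline two heuristics --- a leading-order calculus for the $\mathfrak{sl}_2$ recursion, and the $1$-loop/wheels picture for Alexander --- but neither is carried to a proof. Be careful with the second heuristic in particular: the identification of the Alexander polynomial with the wheels part of $Z(K)$ is itself a theorem in the MMR circle of ideas, so invoking it here risks circularity. The actual argument in \cite{MMR} is different and concrete: both $W_{JJ}$ and $W_C$ are computed directly on chord diagrams via the intersection matrix $\mathrm{IM}(D)$ (permanent- and determinant-type expansions, respectively), and the identity on primitives is then a combinatorial cancellation. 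That computation is what is missing from your proposal.
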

 
 The proof of the MMR conjecture found in \cite{MMR}  consists of reducing the equality of Vassiliev power series to an equality of weight systems. Recall that a Vassiliev invariant $\nu$ of order $n$ gives us a weight system $W_n(\nu)$ of order $n$ by $W_n(\nu)(D)=\nu(K_D)$, where $D$ is a chord diagram of degree $n$ and $K_D$ is a singular knot whose chord diagram is $D$. Let 
   \[W_{JJ}\colon=\sum_{n=0}^{\infty} W_n(b_{n,n}) \ \text{and}\ W_C\colon=\sum_{n=0}^{\infty} W_n(c_n).\]
Then it is shown in \cite{MMR} that the equality $\eqref{MMRPoly}$ is equivalent to 
  \[W_{JJ}\cdot W_C=\mathbf{1}.\]
 Here $\mathbf{1}$ denotes the weight system that takes value 1 on the empty chord diagram and 0 otherwise. Recall also that the product of two weight systems is given by 
  \[W_1\cdot W_2(D)=m(W_1\otimes W_2)(\Delta(D)),\]
where $m$ denotes the usual multiplication in $\Q$. When $D$ is primitive, we have 
  \[0 =W_{JJ}\cdot W_C(D) =m(W_{JJ}\otimes W_C)(D\otimes 1+1\otimes D)=W_{JJ}(D)+W_C(D).\]    
Thus we obtain
  
  \begin{lemma}
    If $D$ is a chord diagram of degree $2m$, then 
          \[W_{JJ}(\pi_{2m}(D))=-W_C(\pi_{2m}(D)).\] 
  \end{lemma}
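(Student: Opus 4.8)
The statement is an immediate corollary of the weight-system reformulation of the MMR conjecture recorded in the paragraph just above, and the plan is simply to feed the element $\pi_{2m}(D)$ into that identity. First I would recall the result quoted from \cite{PrimitiveLando}, namely that $\pi_{2m}$ is a projection of $\mathcal{A}_{2m}$ onto $\mathcal{P}_{2m}$; consequently $\pi_{2m}(D)$ is a primitive element of $\mathcal{A}_{2m}$ --- a linear combination of chord diagrams, each of degree $2m$ --- so that $\Delta(\pi_{2m}(D)) = 1\otimes \pi_{2m}(D) + \pi_{2m}(D)\otimes 1$.

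Next I would run the same computation already carried out above for an individual primitive diagram, but now for the linear combination $\pi_{2m}(D)$; this is legitimate because $W_{JJ}$, $W_C$, $\Delta$ and the multiplication $m$ are all linear. Concretely, from $W_{JJ}\cdot W_C = \mathbf{1}$ (which is how \cite{MMR} restates \eqref{MMRPoly}) we get $W_{JJ}\cdot W_C(\pi_{2m}(D)) = 0$, since $\mathbf{1}$ vanishes on anything of positive homogeneous degree and $\pi_{2m}(D)$ has degree $2m \geq 2$. Expanding the left-hand side through the coproduct and using $W_{JJ}(1) = W_C(1) = 1$ (each is the value on the empty diagram, i.e.\ the $h^0$-coefficient coming from the unknot, which equals $1$) collapses this to $W_{JJ}(\pi_{2m}(D)) + W_C(\pi_{2m}(D)) = 0$, which is exactly the asserted identity.

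There is no real obstacle here; the only thing worth stating carefully is that $\pi_{2m}(D)$ is a primitive \emph{element} rather than a primitive chord diagram, so that the computation must be read as an identity of linear functionals extended by linearity, together with the observation that $\mathbf{1}$ annihilates all of $\bigoplus_{n\geq 1}\mathcal{A}_n$. (One could also remark that $W_{JJ}$ and $W_C$ are a priori defined on diagrams modulo the framing-independent relation, but this causes no trouble since $\pi_{2m}$ respects that quotient.)
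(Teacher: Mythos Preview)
Your proposal is correct and follows exactly the paper's approach: the lemma is derived by observing that $\pi_{2m}(D)$ is primitive and then plugging it into the identity $W_{JJ}\cdot W_C=\mathbf{1}$, just as in the computation immediately preceding the lemma's statement. Your added remarks on linearity and on $\pi_{2m}(D)$ being a primitive element rather than a single diagram merely make explicit what the paper leaves implicit.
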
  
  
  \medskip
  
 To prove conjecture \ref{sl2Conj}, we need the notion of \defn{logarithm} of a weight system (see \cite[\textbf{Chapter~6}]{GraphLando}). Let $w$ be a weight system and suppose $w$ can be written as $w=\mathbf{1}+w_0$, where $w_0$ vanishes on chord diagrams of degree 0. Then 
  \[\log w\colon =\log (\mathbf{1} +w_0)=w_0-\frac12 w_0^2+\frac13 w_0^3-\cdots\]
is well-defined since for each chord diagram we only have finitely many non-zero summands. 

\begin{lemma}
  Let $w$ be a multiplicative weight system, i.e. $w(D_1\cdot D_2)=w(D_1)w(D_2)$, and $w(\text{empty chord diagram})=1$. If $D$ is a chord diagram of degree $2m$, then
    \[(\log w)(D)=w(\pi_{2m}(D)).\]
\end{lemma}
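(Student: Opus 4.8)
The plan is to expand both sides of the asserted identity over the ordered set partitions of $V(D)$ and to check that they agree term by term.

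First I would unwind $(\log w)(D)$. Since $\Delta$ is coassociative, the $(k-1)$-fold iterated coproduct $\Delta^{(k-1)}\colon\mathcal{A}\to\mathcal{A}^{\otimes k}$ is well-defined, and a straightforward induction from the definition of $\Delta$ gives
\[\Delta^{(k-1)}(D)=\sum_{V(D)=V_1\sqcup\cdots\sqcup V_k}D\big|_{V_1}\otimes\cdots\otimes D\big|_{V_k},\]
the sum running over all ordered partitions of $V(D)$ into $k$ (possibly empty) blocks. Iterating the definition of the product of weight systems, this yields $w_0^k(D)=\sum w_0(D|_{V_1})\cdots w_0(D|_{V_k})$ over the same index set. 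Because $w_0$ vanishes on the empty chord diagram, only partitions into $k$ \emph{non-empty} blocks survive; and on any diagram of positive degree $w_0$ agrees with $w$. Writing $n=2m$ and truncating the sum over $k$ at $n=\#V(D)$, I obtain
\[(\log w)(D)=\sum_{k=1}^{n}\frac{(-1)^{k-1}}{k}\sum_{\substack{V(D)=V_1\sqcup\cdots\sqcup V_k\\ V_i\ne\emptyset}}w\big(D|_{V_1}\big)\cdots w\big(D|_{V_k}\big).\]

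Next I would apply $w$ to the ordered-partition formula \eqref{PrimProj} for $\pi_n(D)$. The partitions in \eqref{PrimProj} run over non-empty blocks — this is exactly what converts the coefficient $(-1)^{k-1}(k-1)!$ of the unordered version into $(-1)^{k-1}(k-1)!/k!=(-1)^{k-1}/k$ — so
\[\pi_n(D)=\sum_{k=1}^{n}\frac{(-1)^{k-1}}{k}\sum_{\substack{V(D)=V_1\sqcup\cdots\sqcup V_k\\ V_i\ne\emptyset}}D\big|_{V_1}\cdots D\big|_{V_k}.\]
Applying $w$, using its linearity and then its multiplicativity $w(D|_{V_1}\cdots D|_{V_k})=w(D|_{V_1})\cdots w(D|_{V_k})$, produces exactly the right-hand side of the previous display. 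Comparing the two establishes the lemma.

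I do not expect a genuine obstacle here: the only points needing care are the coassociativity bookkeeping that identifies $w_0^k(D)$ with a sum over ordered set partitions, the remark that $w_0$ annihilates the degree-$0$ blocks so that its index set coincides with the one in \eqref{PrimProj}, and the matching of the combinatorial coefficients. The hypothesis $\deg D=2m$ is not used beyond $\deg D\ge1$; indeed for $D$ the empty diagram the identity fails, since $(\log w)(D)=0\ne1=w(D)$.
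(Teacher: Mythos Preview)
Your proposal is correct and follows essentially the same route as the paper: both expand $\log w$ as $\sum_{k\ge1}\frac{(-1)^{k-1}}{k}(w-\mathbf{1})^k$, identify $(w-\mathbf{1})^k(D)$ with the sum over ordered partitions of $V(D)$ into $k$ non-empty blocks of $\prod_i w(D|_{V_i})$, and then match this against formula~\eqref{PrimProj} via multiplicativity. Your extra bookkeeping on $\Delta^{(k-1)}$ and your closing remark that only $\deg D\ge1$ is really needed are correct additions but do not change the strategy.
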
   

\begin{proof}
  From the definition of the logarithm of a weight system we have 
   \begin{align*}
     \log w &=\log (\mathbf{1}+(w-\mathbf{1}))\\
                &=(w-\mathbf{1})-\frac12(w-\mathbf{1})^2+\frac13(w-\mathbf{1})^3-\cdots
   \end{align*}
 Now if $D$ is a chord diagram, then $(w-\mathbf{1})(\text{empty chord diagram})=0$ and $(w-\mathbf{1})(D)=w(D)$ if $D$ has degree $>0$. Therefore, 
   \begin{align*}
   (w-1)^k(D)&=\sum_{V_1\sqcup V_2\sqcup\cdots\sqcup V_k=V(D)}w(\left. D\right|_{V_1})w(\left. D\right|_{V_2})\cdots w(\left. D\right|_{V_k})\\
                &=\sum_{V_1\sqcup V_2\sqcup\cdots\sqcup V_k=V(D)}w(\left. D\right|_{V_1}\cdot\left. D\right|_{V_2}\cdots \left. D\right|_{V_k}),
    \end{align*}
  where the sum is over ordered disjoint partition of $V(D)$ into non-empty subsets and the last equality follows from the multiplicativity of $w$. Comparing with equation $\eqref{PrimProj}$ we obtain our desired equality.  
\end{proof}

It is known that the weight system $W_C$ is multiplicative. Therefore for a chord diagram $D$ of degree $2m$, 
  \[(\log W_C)(D)=W_C(\pi_{2m}(D)).\]
  
  \medskip

Given an oriented circuit $H$ in a labelled intersection graph, we define the \defn{descent} $d(H)$ of the circuit to be the number of label-decreases of the vertices when we go along the circuit in the specified orientation. We have the following lemma.  

\begin{lemma}
  Given a chord diagram $D$ of degree $2m$, we have 
    \[2R_m(D)=\sum_{H}(-1)^{d(H)}=-(\log W_C)(D),\]
  where the sum is over all oriented circuits $H$ of length $2m$ in $\Gamma(D)$.   
\end{lemma}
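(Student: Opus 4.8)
The lemma asserts two equalities; I would prove $2R_m(D)=-(\log W_C)(D)$ and $2R_m(D)=\sum_H(-1)^{d(H)}$ separately, the first being the one that actually uses the Conway polynomial. For it, the key input is the combinatorial description of the Conway (equivalently Alexander) weight system on chord diagrams through the oriented intersection graph: $W_C(D)$ can be written as a signed count of \emph{cycle covers} of $\Gamma(D)$ — families of vertex-disjoint directed cycles covering all $2m$ vertices — or, compactly, as the top-degree part of $\det(I+A(D))$, where $A(D)$ is the skew-symmetric matrix with $A(D)_{ij}=+1$ if $\Gamma(D)$ has the edge $i\to j$, $-1$ if it has $j\to i$, and $0$ otherwise. (Such a description already makes visible that $W_C$ factors through the intersection graph, as it must.) Since juxtaposing two chord diagrams produces the disjoint union of their intersection graphs, this signed cycle-cover count is multiplicative; hence, by the standard fact that the logarithm of a multiplicative quantity picks out its ``connected'' part, $(\log W_C)(D)$ is the signed count over \emph{single} cycles on all $2m$ vertices, i.e. over Hamiltonian cycles of $\Gamma(D)$. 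Each such cycle, viewed as a $2m$-cycle permutation $\sigma$, contributes $\operatorname{sgn}(\sigma)\prod_i A(D)_{i\sigma(i)}=(-1)^{2m-1}\operatorname{sign}(H)=-\operatorname{sign}(H)$, and the two orientations of each unoriented circuit contribute equally, so the total is $-2\sum_s\operatorname{sign}(s)=-2R_m(D)$.

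The equality $2R_m(D)=\sum_H(-1)^{d(H)}$ is purely combinatorial. Each unoriented circuit $s$ of length $2m$ carries two orientations $H,\bar H$, and along $H$ each of the $2m$ consecutive steps is a label-increase or a label-decrease; reversing the traversal exchanges these, so $d(\bar H)=2m-d(H)$ and $(-1)^{d(H)}=(-1)^{d(\bar H)}$, whence $\sum_H(-1)^{d(H)}=2\sum_s(-1)^{d(s)}$ for the common value $(-1)^{d(s)}$. It then suffices to show $(-1)^{d(s)}=\operatorname{sign}(s)$. For this I would exploit that $\operatorname{sign}(s)$ does not depend on the arbitrary orientations of the chords used to orient $\Gamma(D)$ — reversing one chord flips exactly the two edges of $s$ at the corresponding vertex, and changing an even number of edge weights leaves their product fixed — so one is free to orient the chords so that, read along $s$, the induced orientation of each edge of $\Gamma(D)$ agrees with the traversal exactly at a step where the label increases; the product of the $\pm1$ weights along $s$ is then $(-1)^{d(H)}$, as desired.

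The step I expect to be the main obstacle is the sign bookkeeping in the first part: pinning down the exact combinatorial formula for the Conway weight system, and hence for $\log W_C$, with the precise normalizations at play here — the deframing, the factor $h/(e^{h/2}-e^{-h/2})$ in the definition of $\widetilde C$, and the convention for orienting $\Gamma(D)$ — so that the Hamiltonian-cycle sum emerges as $-2R_m(D)$ rather than with a spurious global sign. The degenerate case $m=1$, in which a ``circuit of length two'' is a single edge traversed in both directions, should also be inspected on its own.
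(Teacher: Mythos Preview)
Your approach to the equality $2R_m(D)=-(\log W_C)(D)$ is essentially the paper's: one uses that $W_C(D)$ is the determinant of a skew-symmetric $\pm1$ adjacency-type matrix of $\Gamma(D)$, expands over permutations, and observes that taking the logarithm isolates the single-Hamiltonian-cycle contributions, each of which carries the sign $(-1)^{2m-1}\operatorname{sign}(H)=-\operatorname{sign}(H)$. That part is fine.

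The gap is in your argument for $(-1)^{d(s)}=\operatorname{sign}(s)$. You correctly note that $\operatorname{sign}(s)$ is unchanged under reversing a chord's orientation, since that flips exactly the two circuit-edges at the corresponding vertex. But precisely for this reason the reachable edge-orientation patterns on the $2m$-cycle form a coset of an index-$2$ subgroup of $(\mathbb{Z}/2)^{2m}$: you cannot prescribe them independently. Unwinding, the assertion ``one can orient the chords so that the edge orientation agrees with the traversal exactly at label-increasing steps'' is \emph{equivalent} to the parity statement $(-1)^{d(H)}=\operatorname{sign}(s)$ you are trying to prove, so the argument is circular. Worse, for an \emph{arbitrary} labelling the identity simply fails. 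For instance, take a chord diagram with $\Gamma(D)=C_4$ (e.g.\ chords $(1,4),(2,7),(5,8),(3,6)$ on eight cyclically ordered points): the unique $4$-circuit has $\operatorname{sign}(s)=+1$, but under the labelling $A{=}1,B{=}2,C{=}3,D{=}4$ one gets $d(H)\in\{1,3\}$ and hence $\sum_H(-1)^{d(H)}=-2\neq 2R_2(D)=+2$.

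What the paper does instead is fix \emph{both} the labelling and (implicitly) the chord orientations by cutting the outer circle at a basepoint: label each chord by the order in which its left endpoint appears, and orient it left-to-right. With this choice every edge of $\Gamma(D)$ points toward the larger label, so a descent along $H$ is exactly a step taken against the edge orientation, and $(-1)^{d(H)}=\operatorname{sign}(s)$ follows immediately. This same labelling is the one used to define the intersection matrix $\mathrm{IM}(D)$ in the second half of the proof, so the two equalities are proved with respect to one consistent choice. Your sign bookkeeping for the determinant/log side goes through once you adopt this labelling; it is only the attempt to treat the labelling as arbitrary that breaks.
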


\begin{proof}
  To prove the first equality, we show that by labelling the chords of $D$ appropriately, the labelled intersection graph $\Gamma(D)$ of $D$ has the property that the edges always go in the direction of increasing indices. To get a required labelling, we cut the outer circle of $D$ to obtain a long chord diagram and then we label the chords by integers $1,2,\dots,2m$ as we encounter them when we go from left to right in an increasing fashion. Then it's clear that a descent will correspond to an edge with weight $-1$. Every circuit $H$ will have two possible orientations $H_{+}$ and $H_{-}$. However, since the circuit has even length, $d(H_{+})$ and $d(H_{-})$ have the same parity and the first equality follows.
  
   The second equality is proved in \cite[\textbf{Proposition~3.13}]{MMR}. Here we briefly describe the main idea for completeness. The key identity is $W_C(D)=\det \mathrm{IM} (D)$, where $\mathrm{IM} (D)$ is the intersection matrix of the chord diagram $D$, which is defined as follows: label the chords of $D$ as above, then $IM(D)$ is the $2m\times 2m$ matrix given by 
    \[ \mathrm{IM}(D)_{ij}=
     \begin{cases}
      \mathrm{sign}(i-j) & \text{if chords $i$ and $j$ of $D$ intersect,}\\
      0 & \text{otherwise.}
     \end{cases}
    \]
  It turns out that $\mathrm{IM}(D)$ only depends on $\Gamma(D)$. The identity is proved by showing that $\det \mathrm{IM}$ satisfies the defining relations of $W_C$ (see \cite[\textbf{Theorem~3}]{MMR}). Now expanding $\det \mathrm{IM}(D)$ we obtain 
    \[W_C(D)=\sum_{H=\bigcup_{\alpha} H_{\alpha}} (-1)^{\mathrm{sign}(\sigma_H)}(-1)^{d(H)}.\]
 Here $H$ is an oriented circuit of length $2m$ in $\Gamma(D)$, $\sigma_H$ is the permutation of the vertices of $\Gamma(D)$ underlying $H$ and $\bigcup_{\alpha} H_{\alpha}$ is the (unordered) cycle decomposition of $\sigma_H$. From there it follows that 
   \[(\log W_C)(D)=-\sum_H (-1)^{d(H)}.\]
 The readers can consult \cite{MMR} for more details.      
\end{proof}

\begin{proof}[Proof of Conjecture \ref{sl2Conj}] 
Let $D$ be a chord diagram of degree $2m$, we have a chain of equalities from the above lemmas
 \[2R_m(D)=\sum_{H} (-1)^{d(H)}=-(\log W_C)(D)=-W_C(\pi_{2m}(D))=W_{JJ}(\pi_{2m}(D)).\]
Therefore,  
  \[\frac{J_{2m}^k(\pi_{2m}(D))}{k}=2R_m(D)k^{2m}+\text{ terms of degree less than $2m$ in $k$}.\]
Plug in $c=(k^2-1)/2$ or $k^2=2c+1$ we obtain   
  \[w'_{\mathfrak{sl}_2}(\pi_{2m}(D))=2^{m+1}R_m(D)c^m+\text{ terms of degree less than $m$ in $c$}.\]
Note that for any primitive diagram $P$ of degree $>1$, we have $w'(P)=w(P)$, where $w'$ is the deframing of a weight system $w$ (see, for instance, \cite[\textbf{Exercise~4.8}]{VassilievDuzhin}). Since $\pi_{2m}(D)$ is primitive (of degree $2m>1$), we obtain     
  \[w_{\mathfrak{sl}_2}(\pi_{2m}(D))=2^{m+1}R_m(D)c^m+\text{ terms of degree less than $m$ in $c$}.\]
 Now we just need to do a change of variable
  \begin{align*}
  w_{\mathfrak{sl}_2,2}(\pi_{2m}(D))&=\frac{1}{2^{2m}}\left.w_{\mathfrak{sl}_2}(\pi_{2m}(D))\right|_{c=2c_2}\\
                                                                &=2R_m(D)c_2^m+\text{ terms of degree less than $m$ in $c_2$},
  \end{align*}
which completes the proof. 
\end{proof}

\section*{Acknowledgements} We wish to thank E. Kulakova, S. Lando, T. 
Mukhutdinova, G. Rybnikov for communications regarding this paper 
and our referees for additional comments. The first author was 
partially supported by NSERC grant RGPIN 262178.


\end{document}